\newtheorem{theorem}{Theorem}[section]
\newtheorem{lemma}[theorem]{Lemma}
\newtheorem{corollary}[theorem]{Corollary}
\theoremstyle{definition}
\newtheorem{remark}[theorem]{Remark}
\numberwithin{equation}{section}
\newskip\aline \newskip\halfaline
\def\skipaline{\vskip\aline}
\def\qedbox{$\rlap{$\sqcap$}\sqcup$}
\def\qed{\nobreak\hfill\penalty250 \hbox{}\nobreak\hfill\qedbox\skipaline}
\def\proofend{\eqno{\mbox{\qedbox}}}
\newcommand\bR{{\mathbb R}}
\newcommand\bZ{{\mathbb Z}}
\newcommand{\ii}{\boldsymbol{i}}
\newcommand{\bp}{{\boldsymbol{p}}}
\newcommand{\bq}{{\boldsymbol{q}}}
\newcommand{\bv}{{\boldsymbol{v}}}
\newcommand{\bsB}{\boldsymbol{B}}
\newcommand{\bsE}{\boldsymbol{E}}
\newcommand{\bsU}{{\boldsymbol{U}}}
\newcommand{\bsi}{\boldsymbol{\sigma}}
\newcommand{\si}{{\sigma}}
\newcommand{\ve}{{\varepsilon}}
\newcommand{\vfi}{{\varphi}}
\newcommand{\eA}{\EuScript{A}}
\newcommand{\eD}{\EuScript{D}}
\newcommand{\eE}{\EuScript{E}}
\newcommand{\eH}{\EuScript H}
\newcommand{\eM}{\EuScript{M}}
\newcommand{\eN}{\EuScript{N}}
\newcommand{\eO}{\EuScript{O}}
\newcommand{\eU}{\EuScript{U}}
\newcommand{\ra}{\rightarrow}
\def\inpr{\mathbin{\hbox to 6pt{\vrule height0.4pt width5pt depth0pt \kern-.4pt \vrule height6pt width0.4pt depth0pt\hss}}}
\newcommand{\pa}{\partial}
\begin{document}

\title[The blowup of the spectral function]{The blowup along the diagonal of the spectral function  of the Laplacian} 

\date{Started on February 2, 2011. Completed on March 2, 2011. Latest revision \today}

\author{Liviu I. Nicolaescu}
\thanks{This work was partially supported by the NSF grant, DMS-1005745.}

\address{Department of Mathematics, University of Notre Dame, Notre Dame, IN 46556-4618.}

\email{nicolaescu.1@nd.edu}
\urladdr{\url{http://www.nd.edu/~lnicolae/}}

\subjclass[2000]{58J50, 35J15, 33C45, 32C05}
\keywords{Riemann manifolds, Laplacian, eigenfunctions, spectral function, real analytic manifolds,  harmonic polynomials}

\begin{abstract}  We formulate  a precise conjecture about  the universal behavior  near the diagonal of the spectral function of the Laplacian    of a smooth compact Riemann manifold. We prove  this conjecture   when  the manifold and the metric are real analytic, and we also present an alternate  proof      when  the manifold is the round sphere.

\end{abstract}

\maketitle

\tableofcontents

\section{Introduction}
\setcounter{equation}{0}

Suppose that $(M,g)$ is a compact, connected $m$-dimensional Riemannian manifold,  and $(\Psi_n)_{n\geq 0}$ is a (complete) orthonormal basis of $L^2(M,g)$ consisting of eigenfunctions of $\Delta_g$
\[
\Delta_g \Psi_n=\lambda_n\Psi_n,\;\; 0=\lambda_0<\lambda_1\leq \lambda_2\leq \cdots\leq \lambda _n\leq \cdots.
\]
For every $L>0$ we define the spectral function  $\eE_L:M\times M\ra \bR$ by
\[
\eE_L(\bp,\bq)=\sum_{\lambda_n\leq L} \Psi_n(\bp)\Psi_n(\bq).
\]
Equivalently, $\eE_L$ is the Schwartz kernel of the orthogonal projection onto
\[
H_L:=\bigoplus_{\lambda\leq L} \ker(\lambda-\Delta).
\]
This shows that as $L\ra \infty$   the spectral function $\eE_L$ converges in the sense of distributions to the Dirac-type distribution supported by the diagonal
\[
\eD_M=\bigl\{ (\bp,\bq)\in M\times M;\;\;\bp=\bq\,\bigr\}.
\]
The goal of this paper is to  describe  a universal law governing  the behavior  of $\eE_L$  as $L\ra \infty$ in an infinitesimal  neighborhood of the diagonal. Here are the specifics.

We denote by $\eN$ the normal bundle    of the diagonal embedding.  For any $\bp\in M$ we denote by $\eN_\bp$ the fiber over $(\bp,\bp)\in \eD_M$.  Also  we  let $r:\eN\ra \bR$ denote the radial distance function along the fibers of $\eN$, and we set
\[
\eN^R:=  r^{-1}\bigl([0,R)\,\bigr),\;\;\eN^R_\bp:=\eN^R\cap\eN_\bp,\;\;\forall R>0,\;\;\bp\in M.
\]
In other words,  $\eN^R\subset \eN$ is  the associated bundle   of normal disks of radius $R$.  If  $\hbar$ is sufficiently small, then  the exponential map  induces a diffeomorphism    from $\eN^\hbar$  onto an open neighborhood $\eU^\hbar$ of the diagonal.   Fix once and for all such a $\hbar$.    We denote by $\eE_L^\hbar$ the pullback of $\eE_L|_{\eU^\hbar}$ to $\eN^\hbar$. 
 
For every  positive real number $\lambda$ we denote by $\eM_\lambda: \eN\ra \eN$ the   rescaling  map described on $\eN_\bp$ by
\[
\eN_\bp\ni\bv \mapsto \frac{1}{\lambda} \bv \in \eN_\bp.
\]
We  define 
\[
\bar{\eE}_L:\eN^{L^{1/2}\hbar}\ra \bR,\;\;\bar{\eE}_L=  L^{-\frac{m}{2}} \eM_{L^{\frac{1}{2}}}^* \eE_L^\hbar.
\]
For any $\bp\in M$ we denote by $\bar{\eE}_{L,\bp}$ the restriction of $\bar{\eE}_L$ to the fiber $\eN_\bp$. \medskip

\noindent\textbf{\textsl{The Universality Conjecture. }}    There exists $\rho>0$ such that  for  any $\bp\in M$ the functions  
\[
\bar{\eE}_{L,\bp}:\eN_\bp^{L^{\frac{1}{2}}\hbar}\ra \bR
\]
  converge  as  $L\ra \infty$ in the topology of $C^\infty(\eN_\bp^\rho)$  to  the  smooth function 
  \[
  E_\infty: \eN_\bp \ra \bR,\;\;E_\infty(u)=\frac{1}{(2\pi |u|)^{ \frac{m}{2} } } J_{\frac{m}{2}}(|u|),
  \]
where $J_\nu$ denotes the Bessel function  of the first kind and order $\nu$.  
\qed

\begin{remark}\noindent (a) The limit function  $E_\infty(u)$ has a more suggestive description, namely
\begin{equation}
E_\infty(u)=\frac{1}{(2\pi|u|)^{\frac{m}{2}}}J_{\frac{m}{2}}(|u|)=\frac{1}{(2\pi)^m}\int_{\bsB_1^m} e^{\ii (\xi,u)} |d\xi|,\;\;\bsB_r^m:=\bigl\{\xi\in\bR^m;\;\;|\xi|^2\leq r\,\bigr\}.
\label{eq: einfty}
\end{equation}
We denote by $\bsE_L$ the spectral function of the Laplacian on $\bR^m$ corresponding to (generalized) eigenvalues $\leq L$.   We then have (see \cite[Eq. (2.1)]{Lai} or \cite[Eq.(1.3)]{Pee})
\[
\bsE_L(x,y)=\frac{1}{(2\pi)^m}\int_{\bsB^m_L} e^{\ii(\xi, x-y)} |d\xi|.
\]
This shows that
\[
E_\infty(u)=\bsE_1(u,0).
\]

\smallskip

(b) We   ought to  elaborate on the  crux of the Universality conjecture.     First of all, let us point out that known local Weyl trace formul{\ae} (\cite{Bin}, \cite[\S 17.5]{Ho3}, \cite[Thm. 1.8.5]{SV}, \cite[\S 7,8]{Zel}) imply that the family $(\bar{\eE}_L\,)_{L\geq 1}$ is precompact  in the  $C^\infty$-topology and   as detailed in see   Section \ref{s: real-an}, any limit point $\bar{\eE}_\infty$  is asymptotically equivalent to $E_\infty$ at $u=0$, i.e., for any $N>0$,
\[
\bar{\eE}_{\infty,\bp}(u)=E_\infty(u) +O(|u|^{-N})\;\;\mbox{as}\;\;u\searrow 0.
\]
The    Universality Conjecture makes the stronger claim  that the family $(\bar{\eE}_L\,)_{L\geq 1}$  has a \emph{unique}  limit point in $C^\infty$-topology and moreover, that limit point is \emph{not just asymptotically equivalent,  but equal} to  the function $E_\infty$.  

(c) Recently, Lapointe, Polterovich and Safarov  \cite{LPS} have described a global  type of  relationship  between  $\eE_L$ and $\bsE_L$  as $L\ra \infty$.

\qed
\label{rem: hor}
\end{remark}

The main result   of this paper  states that  the Universality Conjecture is true in the case when   $M$ and $g$ are real analytic.  We achieve this in  Section \ref{s: real-an} by relying on some \emph{sharp} a priori estimates that we prove in  Appendix \ref{s: ell-est}, Theorem \ref{th: fund}.  A weaker version  of these estimates  were used   by Donnelly and Fefferman \cite[\S 7]{DF} in their investigation of nodal sets of eigenfunctions on real analytic Riemannian manifolds. We believe that Theorem \ref{th: fund}  will find many other uses. In Section  \ref{s: sph} we  give an alternate proof  of the conjecture in   the special case when $(M,g)$ is the round sphere.

\smallskip

\noindent {\bf Acknowledgments.} I  want to thank  Steve Zelditch for a most illuminating discussion  on    spectral geometry.

\section{The Universality Conjecture in the real analytic case}
\setcounter{equation}{0}
\label{s: real-an}

\begin{theorem}  The Universality Conjecture is true  when $M$ and $g$ are real analytic. 
\label{th: conj}
\end{theorem}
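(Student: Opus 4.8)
The plan is to promote the two facts recorded in Remark~\ref{rem: hor}(b) — precompactness of $(\bar{\eE}_L)_{L\ge 1}$ in the $C^\infty$-topology, and the coincidence to infinite order at the origin of every $C^\infty$-limit point with $E_\infty$ — into genuine convergence, by using the real analytic hypothesis to exclude limit points that are merely asymptotic to $E_\infty$. It will suffice to exhibit a $\rho>0$, independent of $\bp$, such that every limit point $\bar{\eE}_\infty$ of the family satisfies $\bar{\eE}_{\infty,\bp}\equiv E_\infty$ on $\eN_\bp^\rho$ for all $\bp\in M$; precompactness then forces the whole family to converge to $E_\infty$ in $C^\infty(\eN^\rho)$, which is the assertion of the Universality Conjecture. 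The real analytic hypothesis will be used exactly once, through Theorem~\ref{th: fund}, to show that the rescaled family is \emph{uniformly} real analytic on a fixed normal ball.

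For the main step I would fix $\bp\in M$, work in geodesic normal coordinates on $B(\bp,2\hbar)$ identified with a ball in $\eN_\bp\cong\bR^m$, and feed the individual eigenfunctions $\Psi_n$ into Theorem~\ref{th: fund}. This should produce constants $C,r_0>0$ depending only on $(M,g)$ — in particular independent of $n$ and of $\bp$ — with
\[
\sup_{B(\bp,r_0)}|\partial^\alpha\Psi_n|\ \le\ C^{|\alpha|+1}\,|\alpha|!\,\bigl(\sqrt{\lambda_n}+|\alpha|\bigr)^{|\alpha|}
\]
for every multi-index $\alpha$ (recall $\|\Psi_n\|_{L^2}=1$). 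Differentiating the finite sum $\eE_L(\bp,\bq)=\sum_{\lambda_n\le L}\Psi_n(\bp)\Psi_n(\bq)$ in $\bq$, and bounding $\sum_{\lambda_n\le L}|\Psi_n(\bp)|\le\bigl(\#\{\lambda_n\le L\}\bigr)^{1/2}\eE_L(\bp,\bp)^{1/2}=O(L^{m/2})$ by Cauchy--Schwarz and the pointwise and counting Weyl laws (uniformly in $\bp$), one obtains
\[
\sup_{B(\bp,r_0)}|\partial^\alpha_\bq\eE_L(\bp,\bq)|\ \le\ C^{|\alpha|+1}\,|\alpha|!\,\bigl(\sqrt{L}+|\alpha|\bigr)^{|\alpha|}\,O(L^{m/2}).
\]
Since $\bar{\eE}_{L,\bp}(u)=L^{-m/2}\eE_L\bigl(\bp,\exp_\bp(L^{-1/2}u)\bigr)$ and every $u$-derivative carries a factor $L^{-1/2}$, this gives, once $L$ is large enough that $L^{-1/2}\eN_\bp^\rho\subset B(\bp,r_0)$,
\[
\sup_{\eN_\bp^\rho}|\partial^\alpha_u\bar{\eE}_{L,\bp}|\ \le\ C_1^{\,|\alpha|+1}\,|\alpha|!\,\bigl(1+|\alpha|L^{-1/2}\bigr)^{|\alpha|},
\]
with $C_1$ independent of $L$, $\bp$ and $\alpha$. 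Taking a convergent subsequence $\bar{\eE}_{L_k,\bp}\to\bar{\eE}_{\infty,\bp}$ in $C^\infty(\eN_\bp^\rho)$ and letting $k\to\infty$ with $\alpha$ fixed kills the factor $(1+|\alpha|L_k^{-1/2})^{|\alpha|}\to 1$, so $\sup_{\eN_\bp^\rho}|\partial^\alpha_u\bar{\eE}_{\infty,\bp}|\le C_1^{\,|\alpha|+1}|\alpha|!$ for all $\alpha$. Thus each limit point $\bar{\eE}_{\infty,\bp}$ is real analytic on $\eN_\bp^\rho$, with a lower bound on its radius of convergence that is independent of $\bp$ and of the subsequence.

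To finish I would recall that $E_\infty$ is real analytic on all of $\eN_\bp\cong\bR^m$ (the restriction of an entire function of exponential type, by \eqref{eq: einfty}); by Remark~\ref{rem: hor}(b) the difference $\bar{\eE}_{\infty,\bp}-E_\infty$ has vanishing jet of infinite order at the origin, and it is real analytic on the connected ball $\eN_\bp^\rho$, so the identity principle forces $\bar{\eE}_{\infty,\bp}\equiv E_\infty$ on $\eN_\bp^\rho$. Since this holds for every $\bp$ and every limit point, precompactness yields $\bar{\eE}_L\to E_\infty$ in $C^\infty(\eN^\rho)$.

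The hard part is the single estimate above, i.e.\ the sharpness of Theorem~\ref{th: fund}: the whole scheme works only because the real analyticity of eigenfunctions on a real analytic manifold can be quantified with the precise eigenvalue dependence $(\sqrt{\lambda_n}+|\alpha|)^{|\alpha|}$ in the derivative bounds, so that after the rescaling $u\mapsto L^{-1/2}u$ and the normalization $L^{-m/2}$ the truncated kernel $\bar{\eE}_L$ acquires Cauchy estimates with $L$-independent constants on a fixed ball. A coarser a priori bound — say one carrying a factor $e^{c\sqrt{\lambda_n}}$ from a crude holomorphic continuation — would be useless here, and one should expect the bookkeeping of constants in the passage from Theorem~\ref{th: fund} to $\bar{\eE}_L$ (uniformity in $\bp$, in $L$, and in $\alpha$ simultaneously) to be the only genuinely delicate point. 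The remaining ingredients — precompactness and the jet at the origin, both quoted from Remark~\ref{rem: hor}, and the identity-principle endgame — are soft.
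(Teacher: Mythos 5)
Your overall architecture (precompactness, infinite-order agreement with $E_\infty$ at $u=0$, uniform analyticity of the limit points, identity principle) is sound, and it is in substance the same mechanism the paper uses; the gap is in how you propose to obtain the uniform Cauchy estimates for $\bar{\eE}_{L,\bp}$. You apply Theorem \ref{th: fund} to individual eigenfunctions and then sum $\sum_{\lambda_n\le L}|\Psi_n(\bp)|\,|\pa^\alpha\Psi_n(\bq)|$ with absolute values. Two problems. First, the eigenfunction bound you write, $\sup|\pa^\alpha\Psi_n|\le C^{|\alpha|+1}|\alpha|!\,(\sqrt{\lambda_n}+|\alpha|)^{|\alpha|}$, is not what Corollary \ref{cor: fund} gives and is false as stated: for $\alpha=0$ it asserts uniformly bounded sup norms, whereas $L^2$-normalized eigenfunctions can have $\|\Psi_n\|_\infty\sim\lambda_n^{(m-1)/4}$ (zonal spherical harmonics); Corollary \ref{cor: fund} carries the factor $L_0^{m/4}=\lambda_n^{m/4}$, and some positive power of $\lambda_n$ is unavoidable. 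Second, once that factor is reinstated your route loses a power of $L$: combining $\sum_{\lambda_n\le L}|\Psi_n(\bp)|=O(L^{m/2})$ (your Cauchy--Schwarz step) with $\max_n\sup|\pa^\alpha\Psi_n|\le K\lambda_n^{m/4}(2T\sqrt{\lambda_n})^{|\alpha|}|\alpha|!$ gives $\sup|\pa^\alpha_\bq\eE_L|\le C^{|\alpha|+1}|\alpha|!\,L^{\frac{3m}{4}+\frac{|\alpha|}{2}}$ rather than the needed $L^{\frac{m+|\alpha|}{2}}$, so after rescaling the Cauchy bound for $\bar{\eE}_{L,\bp}$ carries a factor $L^{m/4}$ and does not survive the passage to a limit point; term-by-term absolute summation cannot see the orthogonality of the $\Psi_n$. (Moreover the conjectural $\bar{\eE}_{L,\bp}$ moves \emph{both} arguments along $\eN_\bp$, so you in fact need the mixed derivatives $\pa^\alpha_x\pa^\beta_y\eE_L$, for which the crude term-by-term bound is worse still, of order $L^{m+\frac{|\alpha|+|\beta|}{2}}$. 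If you try to fix this with Cauchy--Schwarz in both factors you are led to $\|\pa^\alpha_x\eE_L(\bp,\cdot)\|_{L^2}$, i.e. exactly the quantity the paper controls by duality.)

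The missing idea is the two-step duality argument of Appendix \ref{s: ell-est} (following \cite[Thm. 17.5.3]{Ho3}): apply Corollary \ref{cor: fund} to $f_L=P_Lf$ for an arbitrary $f\in L^2(M)$, deduce by duality that $g_0=\pa^\alpha_x\eE_L(\bp_0,\cdot)$ satisfies $\|g_0\|_{L^2}\le K L^{\frac{m}{4}+\frac{|\alpha|}{2}}(2T)^{|\alpha|}|\alpha|!$, note that $g_0\in H_L$, and apply Corollary \ref{cor: fund} once more in the $\bq$ variable. This yields Lemma \ref{lemma: control}, i.e. the sharp bound (\ref{eq: control}) with growth $L^{\frac{m+|\alpha|+|\beta|}{2}}$ and $(|\alpha|+|\beta|)!$-type constants, which after rescaling gives precisely the $L$-independent Gevrey-$1$ estimates your identity-principle endgame requires; the paper then combines these with the jet asymptotics (\ref{eq: 1}) and sums the Taylor series of the limit explicitly, which is the same endgame in a slightly more constructive form.
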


\begin{proof} Since both $M$ and $g$ are real analytic we deduce that the  spectral function $\eE_L$ is real analytic; see \cite{LM, Mo}.    The  Cauchy-Kowaleskaya theorem,  \cite{KP}, implies  that the exponential map  is also real analytic so that $\eE_L^\hbar$ is real analytic.

 Fix  a point $\bp_0$ in $M$ and normal coordinates $x=(x^1,\dotsc, x^m)$  at $\bp_0$ defined on an open neighborhood $\eO$ of $\bp_0$.    With these choices we can  regard the restriction of  $\eE_L$ to  $\eO\times \eO$  as a real analytic function $\eE_L(x,y)$ defined on a neighborhood of $(0,0)$ in $\bR^m\times \bR^m$.        Via the exponential map 
\[
\exp_{(\bp_0,\bp_0)} : T_{(\bp_0,\bp_0)} M\times M\ra M\times M
\]
 we can  identify the    space $\bigl\{ (x,y)\in\bR^m\times \bR^m;\;\;x+y=0\,\bigr\}$ with the   fiber $\eN_{\bp_0}$.

The results  of  \cite{Bin} show that as $L\ra \infty$ we have
\begin{equation}
\frac{\pa ^{\alpha+\beta}}{\pa x^\alpha \pa y^\beta}{\eE}_L(x,y)_{x=y=0}= L^{\frac{m+|\alpha|+|\beta|}{2}}\Bigl(C_{\alpha,\beta}+ O\bigl(\,L^{-\frac{1}{2}}\,\bigr)\,\Bigr),
\label{eq: 1}
\end{equation}
where $C_{\alpha,\beta}=0$ if $\alpha-\beta\not\in 2\bZ^m$,    while if $\alpha-\beta\in 2\bZ^m$ we have
\[
C_{\alpha,\beta}=(-1)^{\frac{|\alpha|-|\beta|}{2}}\frac{1}{(4\pi)^{\frac{m}{2}} 2^{\frac{|\alpha|+|\beta|}{2}}\Gamma\bigl(\, 1+\frac{m}{2}+\frac{|\alpha|+|\beta|}{2}\,\bigr)}\prod_{i=1}^m( \alpha_i+\beta_i-1)!!.
\]
In fact we can say a bit more. More precisely, according to \cite[Thm. 17.5.3]{Ho3},  for any $\alpha,\beta$  there exists a  constant $K_{\alpha,\beta}>0$  that depends on the geometry of $(M,g)$ but it is independent of $\bp_0$ such that

\begin{equation}
\left|\frac{\pa ^{\alpha+\beta}}{\pa x^\alpha \pa y^\beta}{\eE}_L(x,y)\,\right|\leq K_{\alpha,\beta} L^{\frac{m+|\alpha|+|\beta|}{2}},\;\;\forall x,y.
\label{eq: 1.1}
\end{equation}
A priori,  the constants  $K_{\alpha,\beta}$ can grow really fast   as $|\alpha|+|\beta|\ra \infty$. Our next result   provides  a key upper bound on this growth. To keep the flow of arguments uninterrupted we deferred its rather  sneaky proof to Appendix \ref{s: ell-est}.

\begin{lemma} There exist  constant $C, T>0$  such   that for any   $L>1$  and any multi-indices $\alpha$, $\beta$ we have
\begin{equation}
\sup_{(\bp,\bq)\in M\times M} |\pa^{\alpha+\beta}_{x,y}  \eE_L(\bp,\bq)|\leq  C T^{|\alpha|+|\beta|} \bigl(\,|\alpha|+|\beta|\,\bigr)! L^{\frac{m+|\alpha|+|\beta|}{2}},
\label{eq: control}
\end{equation}
where $(x,y)$ denote    normal coordinates at $(\bp,\bq)$.  In other words,  in (\ref{eq: 1.1}) we can choose constants $K_{\alpha,\beta}$ satisfying
\begin{equation}
K_{\alpha,\beta}\leq K T^{|\alpha|+|\beta|} (|\alpha|+|\beta|)!,
\label{eq: control1}
\end{equation}
where $K$ and $T$ are independent of  $\alpha, \beta$ and $L$.\qed
\label{lemma: control}
\end{lemma}

Introduce   new coordinates 
\[
\eta^i:=\frac{1}{\sqrt{2}}(x^i-y^i), \;\;\tau^j:=\frac{1}{\sqrt{2}}(x^j+y^j),\;\; i,j=1,\dotsc, m.
\]
The  fiber $\eN_{\bp_0}$ is described by the  equations $\tau^j=0$, $j=1,\dotsc, m$.  Note that
\[
x^j =\frac{1}{\sqrt{2}}(\eta^j+\tau^j),\;\; y^j=\frac{1}{\sqrt{2}}(\tau^j-\eta^j).
\]
Along $\eN_{\bp_0}$  we can use the functions $(\eta^j)$ as orthonormal coordinates. We have 
\[
r^2=\sum_i(\eta^i)^2,
\]
and
\[
\eE_{L,\bp_0}= \eE_L( 2^{-\frac{1}{2}} \eta, -2^{-\frac{1}{2}}\eta),\;\;\bar{\eE}_{L,\bp_0}(\eta)= L^{-\frac{m}{2}}\eE_L\left( \frac{1}{(2L)^{\frac{1}{2}}} \eta, -\frac{1}{(2L)^{\frac{1}{2}}}\eta\right)
\]
We set
\[
C_{\alpha}(L):=\frac{\pa^\alpha}{\pa_\eta^\alpha}\bar{\eE}_{L,\bp_0}(\eta)|_{\eta=0}.
\]
From (\ref{eq: control1}) we deduce that there exist constants $K, T>0$  such that for any  multi-index $\alpha$ and any  $L>1$ we have 
\begin{equation}
\left|C_{\alpha}(L)\right|\leq K|\alpha|! T^{|\alpha|}.
\label{eq: 1.2}
\end{equation}
Using (\ref{eq: 1}) we deduce that
\[
\pa^\ell_{\eta^i} \bar{\eE}_L(x,y)_{x=y=0}= C_\ell+O\bigl(\, L^{-\frac{1}{2}}\,\bigr),
\]
where $C_\ell=0$ if $\ell$ is odd, while if $\ell$ is even we have
\[
C_\ell=2^{-\frac{\ell}{2}}\sum_{k=0}^\ell (-1)^k \binom{\ell}{k} \pa^{\ell-k}_{x^i} \pa^k_{y^i} \bar{\eE}_L(x,y)_{x=y=0}=(-1)^{\frac{\ell}{2}} \frac{(\ell-1)!!}{(4\pi)^{\frac{m}{2}} \Gamma( 1+\frac{m}{2}+\frac{\ell}{2})}.
\]
More generally,
\begin{equation}
C_{\alpha}(L):= O\bigl(L^{-\frac{1}{2}}\,\bigr)\;\;\mbox{if}\;\;\alpha\not\in 2\bZ,
\label{eq: 1.3}
\end{equation}
and 
\begin{equation}
C_{2\alpha}(L)=(-1)^{|\alpha|} \frac{1}{(4\pi)^{\frac{m}{2}} \Gamma( 1+\frac{m}{2}+|\alpha|)}\prod_{j=1}^m (2\alpha_j-1)!!+ O\bigl(L^{-\frac{1}{2}}\,\bigr).
\label{eq: 1.4}
\end{equation}
The   estimates  (\ref{eq: 1.2}),  (\ref{eq: 1.3}) and (\ref{eq: 1.4}) show that  there exists $\rho>0$  such  the family $\bigl\{ \bar{\eE}_L\,\bigr\}_{L\geq 1}$ of real analytic functions   converges   in the topology of $C^\infty(\{|\eta|<\rho\})$ to a function $\eE_\infty$ that is real analytic on the ball $\{|\eta|<\rho\}$. Moreover,
\begin{equation}
\pa^{2\alpha}_\eta\eE_\infty(0)=(-1)^{|\alpha|} \frac{1}{(4\pi)^{\frac{m}{2}} \Gamma( 1+\frac{m}{2}+|\alpha|)}\prod_{j=1}^m (2\alpha_j-1)!!.
\label{eq: 2}
\end{equation}
We deduce that for $|\eta|<\rho$ we have
\[
\eE_\infty(\eta)=\sum_\alpha \frac{1}{\alpha!}\pa^\alpha_\eta \eE_\infty(0)\eta^\alpha=\sum_{n\geq 0}  \frac{(-1)^n}{(4\pi)^{\frac{m}{2}} \Gamma( 1+\frac{m}{2}+n)}\sum_{|\alpha|=n}\frac{\prod_j(2\alpha_j-1)!}{\prod_j  (2\alpha_j)!} \eta^{2\alpha}
\]
\[
=\sum_{n\geq 0}  \frac{(-1)^n}{(4\pi)^{\frac{m}{2}} \Gamma( 1+\frac{m}{2}+n)}\sum_{|\alpha|=n}\frac{1}{2^{|\alpha|} \alpha!} \eta^{2\alpha}=\sum_{n\geq 0}  \frac{(-1)^n}{(4\pi)^{\frac{m}{2} } 2^n n!\Gamma( 1+\frac{m}{2}+n)}\sum_{|\alpha|=n}\frac{n!}{\prod_j \alpha_j!} \eta^{2\alpha}
\]
\[
=\sum_{n\geq 0}  \frac{(-1)^n}{(4\pi)^{\frac{m}{2} } 2^n n!\Gamma( 1+\frac{m}{2}+n)}\Bigl(\sum_i(\eta^i)^2\,\Bigr)^n=\sum_{n\geq 0}  \frac{(-1)^n}{(4\pi)^{\frac{m}{2} } n!\Gamma( 1+\frac{m}{2}+n)}\Bigl(\frac{r^2}{2}\Bigr)^n
\]
\[
=\frac{1}{(2\pi r)^{\frac{m}{2}} }\times \underbrace{ \Bigl(\frac{r}{2}\Bigr)^{\frac{m}{2}}\sum_{n\geq 0}  \frac{(-1)^n}{n!\Gamma( 1+\frac{m}{2}+n)}\Bigl(\frac{r^2}{2}\Bigr)}_{=: J_{\frac{m}{2}}(r)}.
\]
\end{proof}

\section{The spectral function of a round sphere}
\setcounter{equation}{0}
\label{s:   sph}

To appreciate the complexity involved in the Universality Conjecture we believe  that it is instructive to give an alternate proof of the conjecture in the special case when $M$ is   the round sphere $S^{d-1}\subset \bR^d$.  

 The spectrum of the Laplacian on $S^{d-1}$ is  (see \cite{Mu})
\[
\lambda_n= n(n+d-2),\;\;n=0,1,2,\dotsc
\]
and
\[
\dim \ker(\lambda_n-\Delta)= \mu_n=\frac{2n+d-2}{n+d-2}\binom{n+d-2}{d-2}.
\]
We set
\[
\eH_m:=\ker  (\lambda_m-\Delta),\;\;\bsU_n=\bigoplus_{k=0}^{n} \eH_m.
\]
As is well known, the   space  $\eH_k$ coincides with the space of restrictions to   $S^{d-1}$ of harmonic polynomials  of  degree $k$ in $d$ variables.   Fix  an   is an orthonormal basis $(\Psi_{k, m})_{1\leq \alpha\leq \mu_m}$ of  $\eH_m$   and set
\[
\eE_n(\bp,\bq):=\eE_{\lambda_n}(\bp,\bq)=\sum_{m=0}^n \sum_{k=1}^{\mu_m}\Psi_{k,m}(\bp)\Psi_{k,m}(\bq).
\]
The addition formula  for spherical harmonics \cite{Mu} implies that
\begin{equation}
\sum_{k=1}^{\mu_m} \Psi_{k,m}(\bp)\Psi_{k,m}(\bq)=\frac{\mu_m}{\bsi_{d-1}} P_{m,d}(\bp\bullet\bq),\;\;\forall   \bp,\bq\in S^{d-1},
\label{eq: add}
\end{equation}
where $\bp\bullet \bq$ is the  canonical inner product of $\bp,\bq\in \bR^d$,  $\bsi_{d-1}$ is the area of $S^{d-1}$,
\begin{equation}
\bsi_{d-1}=\frac{2\pi^{\frac{d}{2}}}{\Gamma\bigl(\frac{d}{2}\bigr)},
\label{eq: bsi}
\end{equation}
and $P_{n,d}$ is the Legendre polynomial  of degree $n$ and order $d$,
\[
P_{n,d}(t): =(-1)^n\frac{1}{2^n [n+\frac{d-3}{2}]_n }(1-t^2)^{-\frac{d-3}{2}}\frac{d^n}{dt^n}(1-t^2)^{n+\frac{d-3}{2}},
\]
\[
[x]_k:= x(x-1)\cdots \bigl(\,x-(k-1)\,\bigr)=\frac{\Gamma(x+1)}{\Gamma(x+1-k)}.
\]
The collection   $(P_{n,d})_{n\geq 0}$ is an orthogonal family of  polynomials with respect to the  measure $ w(t)dt=(1-t^2)^{\frac{d-3}{2}}dt$ on  $[-1,1]$.  More precisely we have, \cite[\S 2]{Mu},
\[
\int_{-1}^1  P_{n,d}(t) P_{m,d}(t) w(t) dt =  h_n \delta_{n,m},\;\;h_n=\frac{\bsi_{d-1}}{\bsi_{d-2}\mu_n}.
\]
Observe that we can rephrase (\ref{eq: add}) as
\begin{equation}
\sum_{k=1}^{\mu_m}\Psi_{k,m}(\bp)\Psi_{k,m}(\bq)=\frac{P_{m,d}(\bp\bullet\bq)}{\bsi_{d-2}h_m}.
\label{eq: add1}
\end{equation}
We denote by $k_n$ the leading coefficient of $P_{n,d}$. Its precise value is known, \cite[Eq. (7.6)]{Mu}, but all we need in the sequel is the equality \cite[Eq. (7.7)]{Mu}
\[
\frac{k_{n-1}}{k_{n}}=\frac{n+d-3}{2n+d-4}.
\]
We recall the Christoffel-Darboux formula, \cite[\S 5.2]{AAR},
\begin{equation}
\sum_{m=0}^{n} \frac{P_{m,d}(s)P_{m,d}(t)}{h_m}= \frac{k_{n}}{k_{n+1}}\frac{ P_{n+1,d}(t)P_{n,d}(s)-P_{n+1,d}(s) P_{n,d}(t)}{(t-s)h_{n}}
\label{eq: CD}
\end{equation}
In (\ref{eq: CD}) we let $s=1$. Using the equality $P_{m,d}(1)=1$, $\forall m$, we deduce
\begin{equation}
\sum_{m=0}^{n} \frac{P_{m,d}(t)}{h_m}=\frac{k_{n}}{k_{n+1}}\frac{P_{n+1,d}(t)-P_{n,d}(t)}{h_{n}(t-1)}.
\label{eq: CD1}
\end{equation}
Suming (\ref{eq: add1})  for $m=0,\dotsc, n$ we deduce from (\ref{eq: CD1}) that
\[
\eE_n(\bp,\bq)=\frac{k_{n}}{k_{n+1}}\frac{ P_{n+1,d}(t)-P_{n,d}(t) }{ \bsi_{d-2}h_{n}(t-1) }= \frac{ \mu_{n} }{ \bsi_{d-1} }\frac{ k_{n} }{k_{n+1}}\frac{P_{n+1,d}(t)-P_{n,d}(t)}{(t-1)}.
\]
Hence
\begin{equation}
\eE_n(\bp,\bq)=\eE_n(\vfi):=\frac{ \mu_{n} }{ \bsi_{d-1} }\frac{ k_{n} }{k_{n+1}}\frac{P_{n+1,d}(\cos\vfi)-P_{n,d}(\cos\vfi)}{(\cos\vfi-1)},\;\;\cos\vfi=\bp\bullet\bq.
\label{eq: spec}
\end{equation}
Observe that   $\vfi$ is the geodesic distance between $\bp$ and $\bq$. Now set $r_n:=\sqrt{\lambda_n}$ and define
\[
\bar{\eE}_n(\vfi):=r_n^{-(d-1)}\eE_n\Bigl(\frac{\vfi}{r_n}\Bigr) =\frac{ \mu_{n} }{ \bsi_{d-1}r_n^{d-2} }\frac{ k_{n} }{k_{n+1}}\frac{P_{n+1,d}(\cos\frac{\vfi}{r_n})-P_{n,d}(\cos\frac{\vfi}{r_n})}{r_n(\cos\frac{\vfi}{r_n}-1)}.
\]
Taking into account Remark \ref{rem: hor}, we see that the Universality Conjecture  will follow  from the equality
\begin{equation}
\bar{\eE}_n(\vfi)= \frac{1}{(2\pi\vfi)^{\frac{d-1}{2}}} J_{\frac{d-1}{2}}(\vfi) +O(n^{-1}),\;\;\mbox{uniformly for}\;\;|\vfi|\leq \frac{\pi}{4}.
\label{eq: univ}
\end{equation}
Observe first that   
\begin{equation}
P_{n,d}(t)=\frac{1}{\binom{n+\alpha}{n}}P_n^{(\alpha,\alpha)}(t),\;\;\alpha=\frac{d-3}{2},\;\;\binom{n+\alpha}{n}=\frac{\Gamma(n+\alpha+1)}{\Gamma(\alpha+1)\Gamma(n+1)},
\label{eq: jac}
\end{equation}
where $P_n^{(\alpha,\beta)}$ are the Jacobi  polynomials defined in \cite[\S 4.1]{Sze}.    To prove (\ref{eq: univ})   we  will use the Hilb type  asymptotic estimate for Jacobi polynomials., \cite[Eq. (8.2.17)]{Sze} or \cite[(29)]{Rau}. Here are the details.  Set 
\[
\theta:=\frac{\vfi}{r_n}
\]
Observe that
\begin{equation}
\frac{1}{\binom{n+\alpha}{n}}=O(n^{-\alpha}).
\label{eq: jac1}
\end{equation}
The Hilb type estimate  \cite[Eq. (8.2.17)]{Sze}  coupled with (\ref{eq: jac})  and (\ref{eq: jac1}) yields
\[
\begin{split}
P_{n,d} (\cos \theta) & = \Gamma(\alpha+1)\left(\frac{\theta}{\sin\theta}\right)^{\frac{1}{2}}\frac{2^\alpha}{a_n^\alpha \sin^\alpha\theta} J_\alpha(a_n\theta) + \frac{\theta^{\alpha+2}}{\sin^\alpha \theta} O(1)\\
&=\Gamma(\alpha+1)\left(\frac{\theta}{\sin\theta}\right)^{\frac{1}{2}}\frac{(2\theta)^\alpha}{ \sin^\alpha\theta} \frac{1}{(a_n\theta)^{\alpha}}J_\alpha(a_n\theta) + \frac{\theta^{\alpha+2}}{\sin^\alpha \theta} O(1),
\end{split}
\]
where  $\alpha$  is as in (\ref{eq: jac}) and  $a_n:= n+\frac{d-2}{2}$. We set
\[
 F_\alpha(x):=\frac{1}{x^\alpha} J_\alpha(x)
 \]
 and we deduce that
 \[
 P_{n,d} (\cos \theta) =\Gamma(\alpha+1)\left(\frac{\theta}{\sin\theta}\right)^{\frac{1}{2}}\frac{(2\theta)^\alpha}{ \sin^\alpha\theta} F_\alpha(a_n\theta)+ \frac{\theta^{\alpha+2}}{\sin^\alpha \theta} O(1).
 \]
 Hence
 \[
 \frac{P_{n+1,d}(\cos\theta)-P_{n,d}(\cos\theta)}{r_n(\cos\theta-1)}=-\Gamma(\alpha+1)\frac{P_{n+1,d}(\cos\theta)-P_{n,d}(\cos\theta)}{2r_n\sin^2(\frac{\theta}{2})}
 \]
 \[
 =-\Gamma(\alpha+1)\left(\frac{\theta}{\sin\theta}\right)^{\frac{1}{2}}\frac{(2\theta)^\alpha}{ \sin^\alpha\theta}\Bigl( \frac{F_{\alpha}(a_{n+1}\theta) -F_\alpha(a_n\theta)}{2r_n\sin^2(\frac{\theta}{2})}\,\Bigr)+\frac{\theta^{\alpha+2}}{r_n\sin^{\alpha+2} \theta} O(1).
 \]
Observe that $a_{n+1}\theta= a_n\theta+\theta$ and we deduce 
\[
F_\alpha(a_{n+1}\theta)-F_{\alpha}(a_n\theta)= F'_\alpha(a_n\theta)\theta + O(\theta)^2.
\]
 The  classical  identity involving   Bessel functions, \cite[Eq. (4.6.2)]{AAR},
\[
\frac{d}{dx} \Bigl(x^{-\nu} J_\nu(x)\Bigr)=- x^{-\nu} J_{\nu+1}(x),
\]
implies that  $F'_\alpha(z)=-x^{-\alpha}J_{\alpha+1}(x)$. Now observe that
\[
2r_n\sin^2\Bigl(\frac{\theta}{2}\Bigr)=\frac{r_n\theta^2}{2}\bigl(\, 1- O(\theta^2)\,\bigr).
\]
Since  
\[
O(\theta)=O(r_n^{-1})=O(n^{-1})
\]
we deduce
\[
\frac{P_{n+1,d}(\cos\theta)-P_{n,d}(\cos\theta)}{r_n(\cos\theta-1)}= -\left(\frac{\theta}{\sin\theta}\right)^{\frac{1}{2}}\frac{2^{\alpha+1}\theta^\alpha}{ \sin^\alpha\theta} \frac{F'_{\alpha}(a_n\theta)}{r_n\theta} + O(n^{-1})
\]
\[
=-\frac{2^{\alpha+1}}{\vfi}F_\alpha'(\vfi)+O(n^{-1})= \left(\frac{r}{2}\right)^{-(\alpha+1)} J_{\alpha+1}(r) + O(n^{-1}).
\]
Hence
\[
\bar{\eE}_n(\vfi)= \Gamma(\alpha+1)\frac{ \mu_{n} }{ \bsi_{d-1}r_n^{d-2} }\frac{ k_{n} }{k_{n+1}} \left(\frac{r}{2}\right)^{-(\alpha+1)} J_{\alpha+1}(r) + O(n^{-1}).
\]
The equality  (\ref{eq: univ}) now follows from the estimates
\[
\frac{\mu_n}{r_n^{d-2}}=\frac{2}{(d-2)!} +O(n^{-1}),\;\;\frac{k_n}{k_{n+1}}=\frac{1}{2}+O(n^{-1}),
\]
 the  equality (\ref{eq: bsi}), and the doubling formula
 \[
\sqrt{\pi} \Gamma(2x)=2^{2x-1}\Gamma(x)\Gamma\bigl(x+\frac{1}{2}\,\bigr).\proofend
\]
\begin{figure}[h]
 \includegraphics[height=5cm, width=6cm]{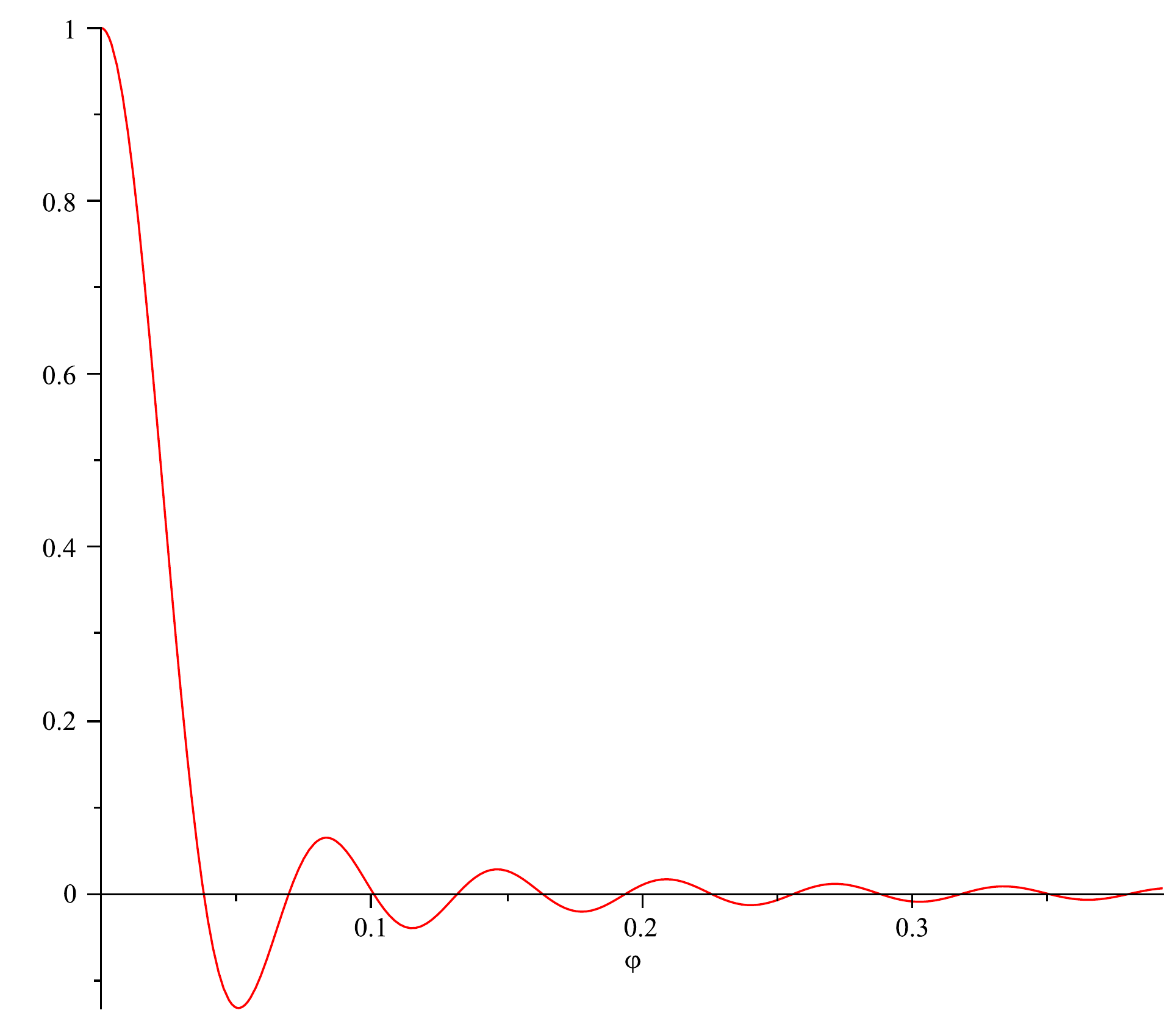}
 \caption{A depiction of $\frac{1}{\eE_{100}(0)}\eE_{100}(\vfi)$, $0\leq \vfi\leq \frac{\pi}{8}$. }
 \label{fig: spec1}
 \end{figure}

\begin{remark}  In the special case $M=S^2$ we can visualize the blowup behavior of the spectral function. In this case the eigenvalues  of the Laplacian are $\lambda_n=n(n+1)$ and 
\[
\bar{\eE}_n(\vfi)=\frac{1}{\lambda_n}\eE_n\left(\frac{\vfi}{\sqrt{\lambda_n}}\right).
\]
The  function $\eE_n(\vfi)$ has a peak at $\vfi=0$, 
\[
\eE_n(0)=\frac{(n+1)^2}{4\pi}\sim \frac{1}{4\pi}\lambda_n\;\;\mbox{as $n\ra \infty$}.
\]
The   oscillations   in the graph  depicted in Figure \ref{fig: spec1} are pushed  at $\infty$  by the rescaling in $\vfi$, and the  resulting behavior  becomes rather tame, Figure \ref{fig: spec2}.
 
 \begin{figure}[h]
 \includegraphics[height=5cm, width=6cm]{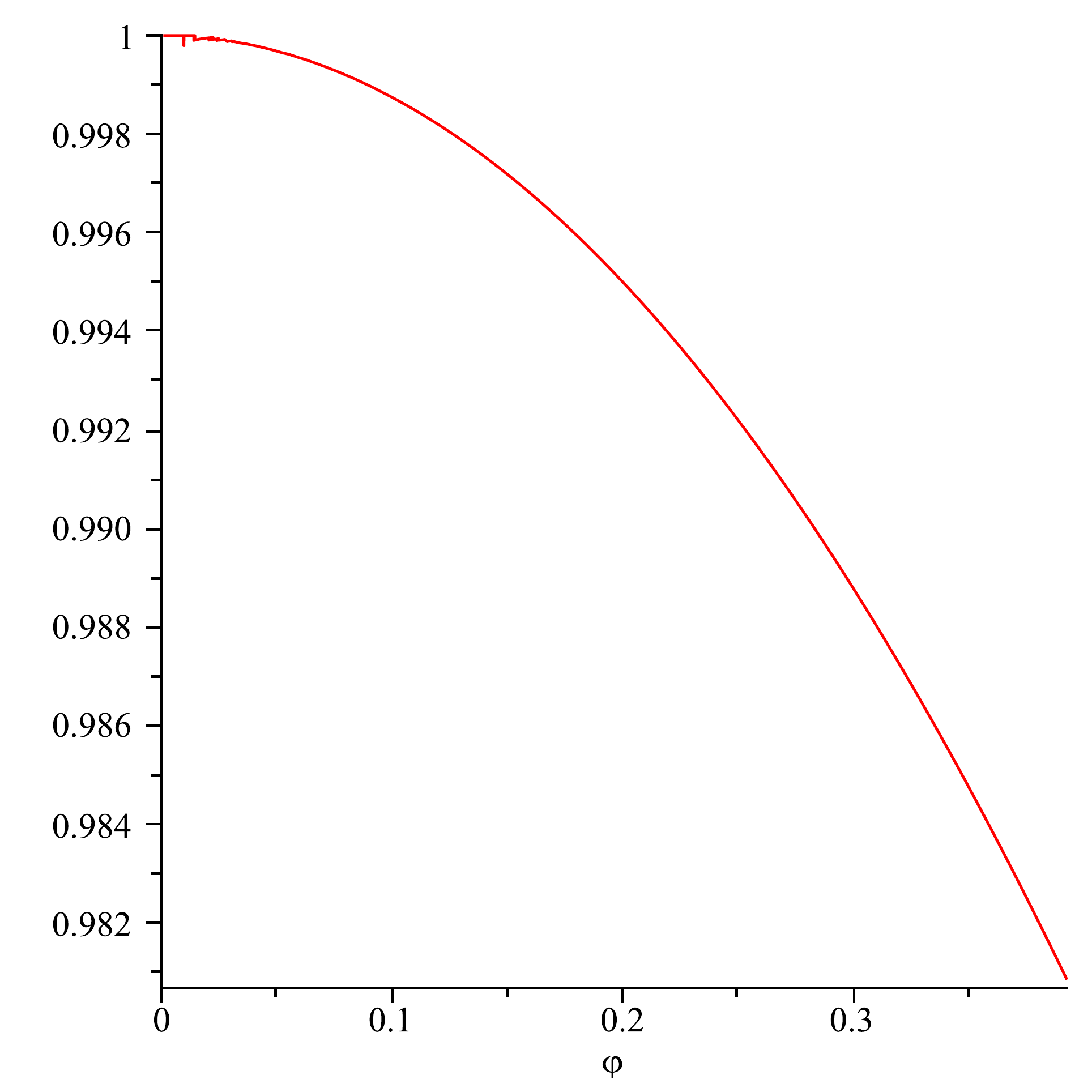}
 \caption{A depiction of $\frac{1}{\eE_{100}(0)}\eE_{100}(\vfi/100)$, $0\leq \vfi\leq \frac{\pi}{8}$.  The rescaling $\vfi\mapsto\frac{\vfi}{100}$ pushes  the oscillations outside the screen.}
 \label{fig: spec2}
 \end{figure}
\qed
\end{remark}

\appendix

\section{Sharp elliptic estimates}
\label{s: ell-est}
\setcounter{equation}{0}

The  main goal of this appendix is to provide a proof of the  estimates (\ref{eq: control}). We will  follow    the same  strategy  employed  in the proof of \cite[Thm. 17.5.3]{Ho3}. We  will reach  a  conclusion  stronger than  \cite[Thm. 17.5.3]{Ho3} because we will rely on   \emph{sharp} a priori elliptic estimates.   We obtain  these sharp estimates from  the precise a priori estimates for elliptic   operators with real analytic  coefficients  in \cite[Chap. 8]{LM}.    For the reader's convenience we first  synchronize our notations with those in  \cite{LM}.

For any  nonnegative integers $s$ we set $M_s:= s!$. Observe that  \cite[Chap. 8, Eq. (1.10)]{LM} becomes
\begin{equation}
M_{t+s}\leq d^{t+s} M_tM_s,\;\;\forall s,t\in\bZ_{\geq 0},\;\;d:=2.
\label{eq: e1}
\end{equation}
 
Denote by  $\Delta$    the Laplacian operator defined by a real analytic metric $g$ on the   ball  $B_4$, where 
 \[
 B_{R}=\bigl\{ x\in \bR^m;\;\; |x|<R\,\bigr\}.
 \]
   For  any $R>0$ we denote by $\eA(B_R)$ the space of  functions real analytic on the ball  $B_R$. We set
   \begin{equation}
   \|u\|_{k,R}=\sum_{|\alpha|=k}\left(\int_{B_R} |\pa^\alpha_x u(x)|^2 |dx|\,\right)^{1,2}.
   \label{eq: sobo}
   \end{equation}
   
  \begin{theorem}  
  Then there exist constants $K,T>0$, $\rho_0\in (0,1) $ that  depend only on the real analytic metric $g$  and satisfying the following properties. If  $u\in \eA(B_2)$  and     $Z_0,L_0>1$ are  such that
  \begin{equation}
  \|\Delta^k u\|_{0,2}\leq Z_0L_0^k M_{2k},\;\;\forall k\geq 0,
  \label{eq: e0}
  \end{equation}  
  then 
  \begin{equation}
  \|u\|_{j,\rho_0}\leq  Z_0 \bigl(\, T\sqrt{L_0}\,\bigr)^j M_j,\;\;\forall j\geq 0.
  \label{eq: fundamental}
  \end{equation}
  \begin{equation}
  |\pa^\alpha_xu(0)|\leq K Z_0L_0^{\frac{m}{4}}\bigl(\, 2T\sqrt{L_0}\,\bigr)^j M_j ,\;\;\forall j\geq0,\;\forall |\alpha|=j. 
  \label{eq: fundamental2}
  \end{equation}
  We want to emphasize that $K$ and $T$ are \emph{independent} of $u$, $L_0$, $Z_0$.
  \label{th: fund}
  \end{theorem}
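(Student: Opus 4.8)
The plan is to deduce Theorem~\ref{th: fund} from the precise interior a priori estimates for elliptic operators with real analytic coefficients established in \cite[Chap.~8]{LM}, the only further inputs being the elementary bound (\ref{eq: e1}), a geometric-series computation, and one Gagliardo--Nirenberg interpolation. Having synchronized notations by taking $M_s=s!$ and $d=2$ — so that (\ref{eq: e1}) is exactly the structural hypothesis Lions--Magenes impose on the defining sequence of a Gevrey class — I would invoke their quantitative bootstrap for the properly elliptic operator $\Delta$ attached to the analytic metric $g$ on $B_4$. Specialized to the analytic class and to the Laplacian, it produces a constant $C_0>0$ and a radius $\rho_0\in(0,1)$, depending only on $g$, such that every $v\in\eA(B_2)$ obeys an estimate of the form
\[
\|v\|_{j,\rho_0}\;\le\;C_0^{\,j+1}\sum_{2l\le j}\frac{M_j}{M_{2l}}\,\|\Delta^l v\|_{0,2},\qquad\forall\, j\ge 0 .
\]
Morally this is the Morrey--Nirenberg bootstrap: one expresses the derivatives of $v$ of order $2l$ through $\Delta^l v$, each of the remaining $j-2l$ differentiations — including those landing on an analytic coefficient of $\Delta^l$, which are dominated by Cauchy's estimates for $g$ — contributing only a bounded factor, while Leibniz's rule manufactures the ratio $M_j/M_{2l}$. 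The feature that makes this estimate \emph{sharp}, and hence lets it improve on \cite[Thm.~17.5.3]{Ho3}, is that the sole factorial it generates is the single $M_j$.

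Granting this, (\ref{eq: fundamental}) is pure arithmetic: plugging the hypothesis (\ref{eq: e0}) into the right-hand side gives $\|u\|_{j,\rho_0}\le C_0^{\,j+1}Z_0\,M_j\sum_{2l\le j}L_0^{\,l}$, and since $L_0>1$ the geometric sum is $\le 2\bigl(\sqrt{2L_0}\bigr)^{\,j}$ (separately for $L_0\ge 2$ and for $1<L_0<2$), whence $\|u\|_{j,\rho_0}\le 2C_0\,Z_0\,M_j\bigl(C_0\sqrt{2L_0}\bigr)^{\,j}$; for $j\ge 1$ the leading constant is absorbed into the base upon enlarging $C_0$, and for $j=0$ the claim is immediate from (\ref{eq: e0}), which yields (\ref{eq: fundamental}) with some $T=T(g)$. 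For (\ref{eq: fundamental2}) I would fix an integer $s>m/2$ and apply to $w:=\pa^\alpha_x u$, $|\alpha|=j$, the Gagliardo--Nirenberg inequality
\[
\|w\|_{L^\infty(B_{\rho_0})}\;\le\;C\,\|w\|_{L^2(B_{\rho_0})}^{\,1-\frac{m}{2s}}\,\|w\|_{H^s(B_{\rho_0})}^{\,\frac{m}{2s}} .
\]
Here $\|w\|_{L^2(B_{\rho_0})}\le\|u\|_{j,\rho_0}$, while $\|w\|_{H^s(B_{\rho_0})}\le\sum_{i=0}^s\|u\|_{j+i,\rho_0}$ is bounded, via (\ref{eq: fundamental}) and the splitting $M_{j+i}\le 2^{j+i}M_jM_i$ coming from (\ref{eq: e1}), by $c(g)\,Z_0\,M_j\bigl(2T\sqrt{L_0}\bigr)^{\,j}L_0^{\,s/2}$; raising to the power $\tfrac{m}{2s}$ turns $L_0^{\,s/2}$ into exactly $L_0^{\,m/4}$ and recombines the powers of $\bigl(2T\sqrt{L_0}\bigr)^{\,j}$, so that $|\pa^\alpha_x u(0)|\le\|w\|_{L^\infty(B_{\rho_0})}\le K\,Z_0\,L_0^{\,m/4}\bigl(2T\sqrt{L_0}\bigr)^{\,j}M_j$ with $K=K(g)$, which is (\ref{eq: fundamental2}).

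The real work, and the source of the sneakiness the text alludes to, sits in the first paragraph: one must extract from \cite[Chap.~8]{LM} an estimate whose $j$-dependence is precisely $C_0^{\,j}$ times a \emph{single} $M_j=j!$, with constants uniform over all of $B_2$; a cruder version (extra factorials, or constants growing faster than geometrically) would still yield analyticity but would be worthless for Lemma~\ref{lemma: control}. Two points require care. First, one must genuinely use the \emph{multiplicative} interpolation between consecutive derivative orders (Gagliardo--Nirenberg / Landau--Kolmogorov) in order to reach all orders from the even-order data $\Delta^l u$; the crude additive inequality $\|v\|_{j+1}\lesssim\|v\|_{j+2}+\|v\|_{j}$ leaks a factor $L_0^{1/2}$ that cannot be reabsorbed when $L_0$ is large, and it is the same multiplicative-over-additive gain that is responsible for the sharp exponent $m/4$ rather than $m/4+\tfrac12$ in (\ref{eq: fundamental2}). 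Second, the Cauchy estimates for the coefficients of $\Delta$ must be threaded through the bootstrap so that every extra differentiation of the equation costs only $O(1)$ — exactly the sharpness that is unavailable in \cite[Thm.~17.5.3]{Ho3}. Once Theorem~\ref{th: fund} is in hand, Lemma~\ref{lemma: control} follows by applying it to $u=\eE_L(\cdot,\bq)$ for fixed $\bq$ — for which (\ref{eq: e0}) holds with $L_0=L$ and $Z_0\sim L^{m/4}$, since $\|\Delta_\bp^k\eE_L(\cdot,\bq)\|_{L^2}^2=\sum_{\lambda_n\le L}\lambda_n^{2k}\Psi_n(\bq)^2\le L^{2k}\eE_L(\bq,\bq)$ and $\eE_L(\bq,\bq)\sim L^{m/2}$ by the pointwise Weyl law, the $L_0^{m/4}$ prefactor being exactly what then absorbs $Z_0$ — together with an analogous treatment of the second variable (equivalently, an application of the theorem to the frequency-rescaled kernel on $M\times M$), which occupies the remainder of the appendix.
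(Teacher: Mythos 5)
Your overall route is the same as the paper's: both arguments lean on the quantitative interior estimates for elliptic operators with real analytic coefficients from \cite[Chap.~8]{LM} (with $M_s=s!$, $d=2$), and both finish with a Sobolev-type pointwise bound in which the length scale is tuned to $L_0^{-1/2}$ so as to produce the factor $L_0^{m/4}$ in (\ref{eq: fundamental2}). The differences are in the finishing steps. For the pointwise bound the paper uses Morrey's representation inequality on a ball of radius $r\sim (B\sqrt{L_0+1})^{-1}$ and sums a binomial series, whereas you use the multiplicative Gagliardo--Nirenberg inequality with exponent $\tfrac{m}{2s}$; both yield the exponent $\tfrac{m}{4}$, and your bookkeeping ($M_{j+i}\le 2^{j+i}M_jM_i$, $(L_0^{s/2})^{m/(2s)}=L_0^{m/4}$) is correct. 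For the passage to odd orders, note that the paper does exactly what you declare lossy: it applies the additive interpolation inequality of \cite[Chap.~8, Eq.~(2.7)]{LM} in its weighted form $\|u\|_{2k+1,\rho_0}\le \zeta^{-1}\|u\|_{2k+2,\rho_0}+c_m\zeta\|u\|_{2k,\rho_0}$ with $\zeta\sim\sqrt{L_0}$; the weights make it equivalent to the multiplicative inequality, so no factor $\sqrt{L_0}$ leaks. Your warning on this point is inaccurate, though harmless, since your alternative also works.

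The genuine weak point is that your first paragraph is a citation, not a proof. The master estimate $\|v\|_{j,\rho_0}\le C_0^{j+1}\sum_{2l\le j}\frac{M_j}{M_{2l}}\|\Delta^l v\|_{0,2}$ is not stated in \cite{LM} in this form; extracting an estimate of exactly this strength is the content of the appendix, which adapts \cite[Chap.~8]{LM} through the chain Lemma~\ref{lemma: e1}--Lemma~\ref{lemma: e4} (localized elliptic estimate, commutator estimate threading Cauchy bounds on the coefficients through the Leibniz expansion, the quantities $\si^k(u,\lambda,R)$ and their recursion, and the induction of \cite[Chap.~8, Thm.~2.2]{LM}), and whose direct output is only the even-order bound (\ref{eq: e7}) under the hypothesis (\ref{eq: e0}). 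You correctly identify this extraction as the crux, but as written you assume it, so as a self-contained argument the proposal has a gap precisely where the paper does its work; ``morally a Morrey--Nirenberg bootstrap'' does not by itself guarantee the single factorial $M_j$ and geometric constants uniform over $B_2$. Your closing sketch of how Theorem~\ref{th: fund} yields (\ref{eq: control}) (apply it in each variable to $\eE_L(\cdot,\bq)$, with $Z_0\sim L^{m/4}$ from the pointwise Weyl bound) is in substance the paper's two-step application via duality, so that part is fine.
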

  
 \begin{proof} We follow closely the approach in \cite[Chap. 8, Sec. 2]{LM}.    From   Theorem 2.1 op. cit. we deduce  the following.
 
 \begin{lemma} There exist $\rho_1,C_1>0$, $\rho_1<1$,  such that if $0<\rho<\rho+\delta<\rho_1$  and any $u\in \eA(B_2)$ we have 
 \begin{equation}
 \|u\|_{2,\rho} \leq C_1\left(\|\vfi_{\rho,\delta}\Delta u\|_{0,\rho+\delta}+\sum_{\ell=0}^{2m-1}\frac{1}{\delta^{2m-\ell}}\|u\|_{\ell,\rho+\delta}\right).
 \label{eq: e2}
 \end{equation}
 \qed
 \label{lemma: e1}
 \end{lemma}
 
 Arguing as in   the proof of  \cite[Chap. 8, Lemma 2.5]{LM} we obtain the following result. 
 \begin{lemma} Suppose  that $a\in \eA(B_2)$   satisfies 
 \[
 \sup_{X\in B_3} \sum_{|\alpha|=r} |\pa^\alpha a(x)|\leq L^r M_r,\;\;\forall r\geq 0.
 \]
 Then for every $r,s\in \bZ_{\geq 0}$, $0<\rho<\rho+\delta<1$ and $u\in \eA(B_2)$ we have 
 \begin{equation}
 \begin{split}
 \sum_{|\beta|=s}\sum_{|\alpha|=r} \| \pa^\beta[\vfi_{\rho,\delta}(a \pa^\alpha u-\pa^\alpha(au)\|_{0,\rho+\delta}\\
 \leq  C^*_s\sum_{\ell=0}^s\frac{1}{\delta^\ell} \sum_{t=0}^{s-\ell} L^{r+t}M_{r+t} \sum_{i=0}^{r-1}\frac{1}{L^iM_i} \|u\|_{s-\ell-t+i,\rho+\delta},
 \end{split}
 \label{eq: e3}
 \end{equation}
 where $C^*_s$ depends only on $s$.\qed
 \label{lemma: e2}
 \end{lemma}
 
 Using  Lemma \ref{lemma: e1} and \ref{lemma: e2} we deduce as in \cite[Chap. 8, Lemma 2.6]{LM} the following result.
 
 \begin{lemma}  Suppose that $\rho_1, C_1$ are as in Lemma \ref{lemma: e1}. For any $\ve>0$  there exists $\gamma(\ve)>0$  that depends only on the metric and $\ve$ such that  for any  $u\in \eA(B_2)$, $k>0$ and $\rho<\rho+\delta <\rho_1$ we have 
 \begin{equation}
 \begin{split}
 \|u\|_{2k+2,\rho}\leq C_1^* \Biggl\{ \|\Delta u\|_{2k,\rho+\delta}+\ve\|u\|_{2k+2,\rho+\delta}  \\
  M_{2k} \sum_{s=0}^{k=1}\frac{\gamma(\ve)^{k-s}}{M_{2s}} \|u\|_{2s+2,\rho+\delta}+\frac{M_{2k}}{\delta^2} \sum_{s=-1}^{k-1}\frac{\gamma(\ve)^{k-s-1}}{M_{2s+2}} \|u\|_{2s+2,\rho+\delta}\,\Biggr\},
 \end{split}
 \label{eq: e4}
 \end{equation}
 where the constant $C_1$ depends only on the metric $g$.\qed
 \label{lemma: e3}
 \end{lemma}
 
 For every $\lambda, R>0$ such that $R<\rho_1$ we set
 \[
 \si^k(u,\lambda, R):=\frac{1}{M_{2k}\lambda^{k+1}} \sup_{R/2\leq \rho <R} (R-\rho)^{2k+2}\|u\|_{2k+2,\rho}.
 \]
 Using Lemma \ref{lemma: e3}  as in the proof of \cite[Chap. 8, Lemma 2.7]{LM} we obtain  the following result.

 \begin{lemma} There exists $\lambda_1>0$ such that for any $R<\frac{1}{2}\rho_1$, $\lambda\geq \lambda_1$, $k\geq 0$  and every $u \in \eA(B_2)$ we have 
 \begin{equation}
 \si^k(u,\lambda, R)\leq \frac{M_{2k-2}}{4M_{2k}}\si^{k-1}(\Delta u,\lambda R) +\frac{1}{4}\sum_{s=-1}^{k-1} \si^s(u,\lambda, R).
 \label{eq: e5}
 \end{equation}
 \qed
 \label{lemma: e4}
 \end{lemma}
 An argument identical  to the one used  in the  proof of \cite[Chap. 8, Thm. 2.2]{LM}  and based on Lemma \ref{lemma: e4}   shows  that if  $u\in \eA(B_2) $    satisfies (\ref{eq: e0}), then for $\lambda, R$ as in Lemma \ref{lemma: e4} we have
  \begin{equation}
 \si^k(u,\lambda, R)\leq \frac{M_{2k+2}}{M_{2k}}Z_0(4L_0+2)^{k+1}.
 \label{eq: e6}
 \end{equation}
 If we let $\lambda=\lambda_1$  and $R_0=\frac{1}{4}\rho_1$  in the above   inequality we deduce
 \[
 \frac{1}{\lambda_1^{k+1}}\sup_{R_0/2 \leq \rho< R_0}(R_0-\rho)^{2k+2}\|u\|_{2k+2, \rho}\leq  M_{2k+2}Z_0(4L_0+2)^{k+1}
 \]
 In particular, if $\rho_0=3R_0/4$  and $\mu:=\sqrt{\lambda_1}$ we deduce
 \[
 \|u\|_{2k+2, \rho_0} \leq c_0 M_{2k+2}\left(\frac{4\mu}{R_0}\right)^{2k+2}(4L_0+2)^{k+1}= Z_0 M_{2k+2} \zeta^{2k+2},\;\;\zeta:=\frac{4\mu}{R_0}\sqrt{4L_0+2}.
 \]
We can assume that $\zeta>1$.

 Using the interpolation inequality  \cite[Chap. 8,  Eq. (2.7)]{LM} we deduce that
 \[
 \|u\|_{2k+1,\rho_0} \leq \zeta^{-1}\|u\|_{2k+2,\rho_0} +c_m\zeta \|u\|_{2k,\rho_0},
 \]
 where $c_m>0$ is a constant that depends only on the dimension $m$. Hence
 \[
 \|u\|_{2k+1,\rho_0}\leq  Z_0 M_{2k+2} \zeta^{2k+1} +c_mZ_0 M_{2k}\zeta^{2k+1}
 \]
 \[
 =Z_0M_{2k+1}\zeta^{2k+1}\Bigl( (2k+2) +\frac{c_m}{(2k+1)}\Bigr)\leq Z_0M_{2k+1}\bigl(\,(2+c_m)\zeta\,\bigr)^{2k+1}.
 \]
 We deduce that for any $k\geq 0$ we have
 \begin{equation}
 \|u\|_{k,\rho_0} \leq Z_0 M_k  Z^k,\;\; Z= (2+c_m)\zeta.
 \label{eq: e7}
 \end{equation}
 This proves (\ref{eq: fundamental}) with 
 \[
T=\frac{Z}{\sqrt{L_0+1}}.
 \]
 Using the Sobolev lemma \cite[Thm. 3.5.1]{Mo}  we deduce\footnote{We have to warn the reader that    the Sobolev  norms  $\|\nabla^jv\|_{2, R}$ defined  in \cite{Mo} do not coincide with the ones defined in (\ref{eq: sobo}) but they are equivalent. The constants implied by this equivalence of norms depend only on $j$ and $m$.} that there exists a constant  $K_m>0$ that depends only on $m$  such that for any $v\in C^\infty(B_{r/2})$ we have 
 \[
 |v(0)| \leq\frac{K_m}{r^{\frac{m}{2}}}\left( \sum_{j=0}^{p-1}\frac{r^j}{j!}\|v\|_{j,r}+ \frac{r^p}{(p-1)!} \|v\|_{p,\rho_0}\right),\;\; p=p_m:=\left\lfloor \frac{m}{2}\right\rfloor +1.
 \]
 We deduce that for any multi-index $\alpha$ such that $|\alpha|=k$  and any $r$ such that $r/2<\rho_0$ we have
 \[
 |\pa^\alpha u(0)|\leq \frac{K_m}{r^{\frac{m}{2}}}\left( \sum_{j=0}^{p-1}\frac{r^j}{j!}\|v\|_{j+k,\rho_0}+ \frac{r^p}{(p-1)!} \|u\|_{p+k,\rho_0}\right)
 \]
 \[
 \stackrel{(\ref{eq: e7})}{\leq} \frac{K_m Z_0}{r^{\frac{m}{2}}} \left( \sum_{j=0}^{p-1}\frac{r^j(k+j)!}{j!}Z^{k+j}+ \frac{r^p (k+p)!}{(p-1)!} Z^{k+p}\right)
 \]
 \[
 = \frac{K_m Z_0 k! Z^k}{r^{\frac{m}{2}}}\left( \sum_{j=0}^{p-1}\binom{k+j}{j}(rZ)^{j}+ p\binom{k+p}{p} (rZ)^{p}\right).
 \]
 Recall that $Z=T\sqrt{L_0+1}$. Choose $r$ of the form
 \[
 r=\frac{1}{B\sqrt{L_0+1}} < \frac{1}{B\sqrt{2}}.
 \]
  where $B$ is sufficiently  large so that
  \[
  \frac{r}{2}=\frac{1}{2B\sqrt{L_0+1}} < \frac{1}{2B\sqrt{2}}<\rho_0,
  \]
  and 
 \[
 (rZ)=\frac{T}{B}\leq \frac{1}{2}.
 \]
  \[
 |\pa^\alpha u(0)|\leq  K_m B^m Z_0(L_0+1)^{\frac{m}{4}} Z^k k! \left( \sum_{j=0}^{p-1}\binom{k+j}{j}2^{-j}+ p\binom{k+p}{p} 2^{-p}\right)
 \]
 \[
 \leq p_mK_m B^m Z_0(L_0+1)^{\frac{m}{4}} Z^k k! \underbrace{\sum_{j=0}^\infty\binom{k+j}{j}2^{-j}}_{=2^{k+1}}
 \]
 \[
 =2 p_mK_m B^m Z_0(L_0+1)^{\frac{m}{4}}(2T\sqrt{L_0+1})^k k!
 \]
 This proves  (\ref{eq: fundamental2}) with $K= 2 p_mK_m B^m$.
 \end{proof}

 \begin{remark}    The above arguments extend  with no changes to the case when  the   metric  belongs to a Gevrey space, or more general to one of the spaces $\eD_{M_k}(B_4)$ of \cite{LM}, where the weights   $M_k$  are subject to the constraints  (1.6)-(1.11) in \cite[Chap.8]{LM}.\qed
 \end{remark}

 Theorem \ref{th: fund} has the following immediate  consequence.

 \begin{corollary} Suppose $N$ is a compact   real analytic manifold  of dimension $n$ and $g$ is a real analytic metric  on $N$ with injectivity radius $r(N)$. Denote     by  $\Delta_g$ the Laplace operator of the metric $g$ and by $\eA(N)$ the space of real analytic functions on $N$.  Then there exist constants $K,T>0$ and $0<\rho_0<r(N)$ depending only on  $g$  with the following property: for any $Z_0, L_0>1$ and any  $u\in \eA(N)$  such that, if 
 \begin{equation}
 \|\Delta_g^k u\|_{L^2(N,g)}\leq Z_0 L_0^k k!,\;\;\forall k\geq 0,
 \label{eq: e10}
 \end{equation}
 then
 \begin{equation}
  |\pa^\alpha_xu(\bp)|\leq  K Z_0 L_0^{\frac{m}{4}}  \bigl(\, 2T\sqrt{L_0}\,\bigr)^jj!,\;\;\forall |\alpha|=j,\;\;\forall \bp\in M,
 \label{eq: e11}
 \end{equation}
 where  $x=(x^1,\dotsc, x^m)$ are normal coordinates  at $\bp$.\qed
 \label{cor: fund}
 \end{corollary}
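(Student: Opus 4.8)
The plan is a localisation argument: about every point $\bp\in N$ use a rescaled normal coordinate chart, push the hypothesis (\ref{eq: e10}) into that chart, invoke Theorem \ref{th: fund} (specifically (\ref{eq: fundamental2})), and pull the conclusion back. Write $m:=\dim N$. The only point requiring real attention is that the constants $K,T,\rho_0$ of Theorem \ref{th: fund} a priori depend on the particular metric on $B_4$; one must check that a single choice works for all $\bp$.

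First I would set up the charts. By compactness of $N$ and real analyticity of $g$ there is $\eps_0>0$, depending only on $g$, so that for every $\bp\in N$ the map $\phi_\bp:B_4\to N$, $\phi_\bp(\tilde x):=\exp_\bp(\lm\tilde x)$ with $\lm:=\eps_0/4$, is a real analytic embedding (using that $\exp$ is real analytic, as recalled in Section \ref{s: real-an}). Set $g_\bp:=\lm^{-2}\phi_\bp^*g$, a real analytic metric on $B_4$ with $g_\bp(0)$ Euclidean and with $\Delta_{g_\bp}=\lm^{2}\,\phi_\bp^*\Delta_g(\phi_\bp^*)^{-1}$, so that $\Delta_{g_\bp}^k(\phi_\bp^*u)=\lm^{2k}\phi_\bp^*(\Delta_g^k u)$. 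Again by compactness of $N$, the family $\{g_\bp\}_{\bp\in N}$ is uniformly elliptic on $B_4$, its coefficients obey a uniform analytic bound $\sup_{B_3}\sum_{|\gamma|=r}|\pa^\gamma(g_\bp)_{ij}|\le C_0R_0^r\,r!$, and the norms $\|\cdot\|_{L^2(B_4,\mathrm{Eucl})}$ and $\|\cdot\|_{L^2(B_4,g_\bp)}$ are equivalent with constants independent of $\bp$.

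The crux is then the uniformity of the constants. Inspecting the proof of Theorem \ref{th: fund}, the output constants are assembled from $\rho_1,C_1$ of Lemma \ref{lemma: e1} and from $\lm_1,\gamma(\ve)$ of Lemmas \ref{lemma: e3}--\ref{lemma: e4}, and each of those depends on the metric only through upper and lower ellipticity bounds and through the analytic-growth constants of the coefficients of $\Delta_g$. Since the $g_\bp$ satisfy such bounds uniformly, one triple $(K,T,\rho_0)$ serves all $\bp$. (Equivalently: $\{g_\bp\}$ is a relatively compact family of real analytic metrics on $B_4$, with uniformly controlled analytic seminorms in the sense of the Remark following Theorem \ref{th: fund}, and the constants of that theorem are locally bounded on such a family.) This is the step I expect to cost the most care to phrase precisely, since it is a statement about the proof of Theorem \ref{th: fund} rather than about its formal statement.

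Finally I would assemble the estimate. Given $u\in\eA(N)$ satisfying (\ref{eq: e10}) and $\bp\in N$, put $v:=\phi_\bp^*u\in\eA(B_4)$. From $\Delta_{g_\bp}^k v=\lm^{2k}\phi_\bp^*(\Delta_g^k u)$, the volume scaling $dV_{g_\bp}=\lm^{-m}\phi_\bp^*dV_g$, the uniform norm equivalence, the inequality $k!\le M_{2k}$, and (\ref{eq: e10}), one gets $\|\Delta_{g_\bp}^k v\|_{0,2}\le Z_0'(L_0')^k M_{2k}$ with $Z_0'=C\lm^{-m/2}Z_0$ and $L_0'=\lm^2L_0$ (harmlessly enlarging $Z_0',L_0'$ so that the hypotheses $Z_0',L_0'>1$ of Theorem \ref{th: fund} hold, which only weakens the conclusion). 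Theorem \ref{th: fund} then gives $|\pa^\alpha_{\tilde x}v(0)|\le KZ_0'(L_0')^{m/4}(2T\sqrt{L_0'})^jM_j$ for $|\alpha|=j$. Since $\pa_{\tilde x}=\lm\,\pa_x$ along the normal coordinates $x=\lm\tilde x$ at $\bp=\phi_\bp(0)$, multiplying by $\lm^{-j}$ makes all powers of $\lm$ cancel and yields $|\pa^\alpha_x u(\bp)|\le K'Z_0L_0^{m/4}(2T'\sqrt{L_0})^jj!$ with $K',T'$ depending only on $g$; this is (\ref{eq: e11}). Everything except the uniformity paragraph is bookkeeping of scaling factors.
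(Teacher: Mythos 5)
Your argument is correct and is exactly the route the paper intends: Corollary \ref{cor: fund} is stated there as an immediate consequence of Theorem \ref{th: fund}, namely apply the theorem in (rescaled) normal coordinates at each point, with the constants uniform in $\bp$ because the pulled-back metrics satisfy uniform ellipticity and analytic-growth bounds by compactness and real analyticity of $g$ and of the exponential map. Your uniformity paragraph and the scaling bookkeeping simply make explicit what the paper leaves unwritten.
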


 \noindent {\bf  Proof of  (\ref{eq: control})}.     We follow the same strategy   used in the proof of \cite[Thm. 17.5.3]{Ho3}. Fix $L>1$ and denote by $P_L$ the orthogonal projection  onto the space
 \[
 H_L:=\bigoplus_{\lambda\leq L} \ker (\lambda-\Delta).
 \]
 Fix $j,\ell \geq 0$ points $\bp_0,\bq_0$ in $M$, and normal coordinates $x$ at $\bp_0$ and $y$ at $\bq_0$. Let $f\in  L^2(M)$ and set $f_L=P_L f$.  Then
 \[
 \|\Delta^k f_L\|_{L^2(M)}\leq L^k \|f\|,\;\;\forall  k\geq 0.
 \]
 Using  Corollary \ref{cor:  fund}  we deduce   that 
 \[
 |\pa^\alpha_x f_L (\bp_0)| \leq K L^{\frac{m}{4}+\frac{j}{2}} (2T)^j j! \|f\|,\;\;\forall \bp\in M,\;\;|\alpha|=j,
 \]
 where $K, T$ are independent of $f$, $k$, $\alpha$ and $L$. Now observe that
 \[
 \pa^\alpha_x f_L (\bp_0)= (f, g_0)_{L^2(M)},\;\;  g_0(\bq)= \pa^\alpha_x \eE_L(\bp_0,\bq)=\sum_{\lambda_n\leq L} \bigl(\, \pa^\alpha_x \Psi_n(\bp_0)\,\bigr)\Psi_n(\bq).
 \]
The above discussion shows that for every $f\in L^2(M)$ we have
\[
 \bigl|\, (f,g_0)_{L^2(M)}\,\bigr|\leq K L^{\frac{m}{4}+\frac{j}{2}} (2T)^j j! \|f\|,
 \]
so that
\[
\|g_0\|_{L^2(M)}\leq K L^{\frac{m}{4}+\frac{j}{2}} (2T)^j j!.
\]
 Observe that $g_0\in H_L$  and thus for any $k\geq 0$ we have
 \[
 \|\Delta^k g_0\|_{L^2(M)}\leq   L^k \|g_0\|_{L^2(M)}
 \]
 and Corollary \ref{cor: fund} implies that for any $|\beta|=\ell$ we have
 \[
|\pa^{\alpha}_x\pa^\beta_y\eE_L(\bp_0,\bq_0)|= |\pa^\beta_y g_0(\bq_0)|\leq K L^{\frac{m}{4}+\frac{\ell}{2}} (4T)^\ell \ell!\|g\|_0\leq K^2L^{\frac{m}{2}+\frac{j}{2}+\frac{\ell}{2}} T^{j+\ell}j!\ell !
\]
 The inequality   (\ref{eq: control}) follows by observing that
 \[
j!\ell !\leq \binom{j+\ell}{j} (j+\ell)!\leq 2^{j+\ell}(j+\ell)!.
\]
  \qed

\end{document}